\newcommand{\Z}{\mathbb{Z}}
\newcommand{\C}{\mathbb{C}}
\newcommand{\Q}{\mathbb{Q}}
\newcommand{\GL}{\mathrm{GL}}
\newcommand{\Lie}{\mathrm{Lie}}
\newcommand{\mf}{\mathfrak}
\newcommand{\g}{\mf{g}}
\newcommand{\h}{\mf{h}}
\renewcommand{\a}{\mf{a}}
\newcommand{\gl}{\mf{gl}}
\renewcommand{\u}{\mf{u}}
\newcommand{\z}{\mf{z}}
\newcommand{\ttt}{\mf{t}}
\newcommand{\diag}{\mathrm{diag}}
\newcommand{\SmallMatrix}[1]{\text{{\tiny\arraycolsep=0.4\arraycolsep\ensuremath
    {\begin{pmatrix}#1\end{pmatrix}}}}}
\numberwithin{equation}{section}
\newtheorem{theorem}{Theorem}[section]
\newtheorem{lemma}[theorem]{Lemma}
\theoremstyle{remark}
\theoremstyle{remark}
\newtheorem{rmk}[theorem]{Remark}
\title[Computing the Zariski closure of a f.g. matrix group]{Computing the Zariski closure of a finitely generated matrix group}
\author{Willem A. de Graaf}
\address{
Dipartimento di Matematica\\
Universit\`{a} di Trento\\
Italy}
\date{}
\begin{document}

\begin{abstract}
  We describe an algorithm for determining the algebraic subgroup of $\GL(n,\C)$
  that is defined as the closure of the group generated by a finite number of
  elements of $\GL(n,\C)$. The algorithm avoids the use of Gr\"obner bases
  and can be used on non-trivial examples. In the last section we report on
  an implementation of the algorithm in the computer algebra system
  {\tt OSCAR}.
\end{abstract}

\maketitle

\section{Introduction}

By {\em linear algebraic group} (or just algebraic group) we
mean a subgroup of $\GL(n,\C)$ given by the vanishing of a set of polynomials
in $\C[x_{11},x_{12},\ldots,x_{nn}]$. If $G\subset \GL(n,\C)$ is an algebraic
group then by $\Lie(G)$ we denote its Lie algebra, which is a subalgebra
of the Lie algebra $\gl(n,\C)$ consisting of all $n\times n$-matrices with
coefficients in $\C$. We refer to the standard literature for accounts of
the basic properties of algebraic groups and their Lie algebras
(\cite{borel,hum2,tauvelyu}). 

Let $g_1,\ldots,g_s$ be invertible $n\times n$-matrices with coefficients in
$\C$ (in other words, they are elements of $\GL(n,\C)$). Let $\mathcal{H}$
be the subgroup of $\GL(n,\C)$ generated by them. Let $\overline{\mathcal{H}}$
be the Zariski closure of $\mathcal{H}$ in $\GL(n,\C)$. Then
$\overline{\mathcal{H}}$ is a linear algebraic group
(\cite[Proposition 1.3(b)]{borel}). In this paper we consider the problem
to compute $\overline{\mathcal{H}}$ given $g_1,\ldots,g_s$. In order to be
able to work with these elements on a computer we assume that the entries
lie in an extension of finite degree of $\Q$.

This problem has been considered in various ways in the literature.
A first general algorithm is developed in \cite{derksenetal}. This paper also
outlines applications in the theory of quantum automata. We refer to 
\cite{bdp} for further developments in that direction. In \cite{gasta} the
authors look at Zariski closures of cyclic matrix groups (i.e., when $s=1$ in
the above description) and show that their irreducible components are toric
varieties. The paper \cite{hoaw18} considers applications in computer science,
and a generalisation of the problem to groups generated by constructible
sets of matrices. 

First we say some words on what it means to ``compute'' an algebraic
subgroup of $\GL(n,\C)$, in other words, how we can represent the output
of our algorithm (i.e., an algebraic group)
by a finite amount of data. To express this we say that
we ``specify'' an algebraic group.

The most obvious way to specify an
algebraic subgroup $G$ of $\GL(n,\C)$ is by giving a finite set of polynomials
$p_1,\ldots,p_m\in \C[x_{11},x_{12},\ldots,x_{nn}]$ such that $G$ consists of
all $g\in \GL(n,\C)$ such that $p_i(g)=0$ for $1\leq i\leq m$. This is
the most immediate and uncomplicated way to specify an algebraic group. However,
it has a few drawbacks: the set of polynomials can be very bulky, and
given the polynomials it is not easy to obtain information on the group:
whether it is connected or not, solvable or not, what the
dimension of its unipotent radical is, and so on.

A second way to specify $G$ is based on two facts of the theory of algebraic
groups. Firstly, $G$ is a finite union of (pairwise disjoint) components
(\cite[Propositon 1.2(b)]{borel}). 
The component containing the identity is denoted $G^\circ$ and is a normal
subgroup. The other components are the cosets of $G^\circ$ in $G$, so are of
the form $gG^\circ$ for an element $g\in G$. 
A second fact says that the Lie algebra of $G$, which is the same as
the Lie algebra of $G^\circ$, completely determines the identity component
$G^\circ$ (\cite[\S 7.1(2)]{borel}). So we can specify $G$ by giving 
a basis of its Lie algebra, as subalgebra
of $\gl(n,\C)$, and one element of each component of $G$. The main advantage
of this approach is that many questions on $G$ can immediately be answered
by considering the analogous question for the Lie algebra. A disadvantage is
that it is only applicable in characteristic zero. Secondly, it is not
immediate to decide whether a given $g\in \GL(n,\C)$ lies in $G$. In Section
\ref{sec:in} we will give an algorithm for this.

We remark that from a specification of an algebraic group given by polynomials
it is possible to compute its Lie algebra and its components. However, to
determine the Lie algebra in general one needs to determine the radical
of the ideal generated by the polynomials. For the components one needs to
compute the primary decomposition of the same ideal. There are algorithms for
both tasks (\cite[Chapter 8]{beckwsp}), but they rely on heavy Gr\"obner
basis computations and hence can be very difficult to carry out in practice.

In \cite{derksenetal} algorithms are given for the main problem of this paper
that work for
groups in any characteristic. The output is specified using polynomials and one
of the main workhorses of the algorithm are elimination procedures based
on Gr\"obner bases. The latter make the algorithms difficult to carry out
in practice. In fact, the authors write in \cite{derksenetal}
`` In characteristic 0, one might replace H by its tangent space at the
identity. The algorithm should then be modified accordingly. This way one
might avoid Gr\"obner basis computations in the algorithm and one might end
up with an algorithm that is actually practical.'' 

In this paper we essentially do what is written in this last citation.
We use the second method above to specify an algebraic group; that is, by a
basis of its Lie algebra and one element of each component. We describe an
algorithm that does not use Gr\"obner basis computations. In certain steps
it uses computations in number fields, namely, for diagonalizing a torus
(see Sections \ref{sec:tori}, \ref{sec:in}) and for computing the 
multiplicative relations of the eigenvalues of a matrix (see Section
\ref{sec:lieGg}). The main
bottleneck of the algorithm lies now in those computations; with growing
degree of the needed field extensions the computations become rapidly more
difficult. However, we can use the algorithm to deal with nontrivial
examples, see Section \ref{sec:exa}.

We start with a section on algorithms to work with tori (Section
\ref{sec:tori}). In the subsequent Section \ref{sec:in} we give an
algorithm for deciding whether a given $g\in \GL(n,\C)$ lies in a connected
algebraic group $G\subset \GL(n,\C)$, given by a basis of its Lie algebra.
In Section \ref{sec:lieGg} we consider the problem to compute a basis of the
Lie algebra of the smallest algebraic group containing a given $g\in \GL(n,\C)$.
Section \ref{sec:main} has the main algorithm of the paper.
In the last section we report on an implementation of the algorithm in the
computer algebra system {\tt OSCAR} (\cite{OSCAR,OSCAR-book}).

\vspace{2mm}

\noindent{\bf Acknowledgement.} I am very grateful to Thomas Breuer,
Claus Fieker and Max Horn for stimulating discussions and their help regarding
the system {\tt OSCAR}. I also thank Mima Stanojkovski for her remarks on a
previous version of the manuscript. 

\section{Lattices, tori and their Lie algebras}\label{sec:tori}

By a {\em lattice} we mean a finitely generated subgroup of
$\Z^n$ for some $n\geq 1$. For a general introduction into such groups
and to methods for computing with them we refer to the book by Sims,
\cite[Chapter 8]{sims}. A lattice $\Lambda\subset \Z^n$ has a basis, which is
a set of elements $u_1,\ldots,u_k\in \Z^n$ such that each $u\in \Lambda$ can
uniquely be written as $u=m_1u_1 +\cdots +m_k u_k$ with $m_i\in \Z$ for
$1\leq i\leq k$. A lattice $\Lambda$ is called {\em pure} if $\Z^n/\Lambda$ is
torsion free, or equivalently, if for $u\in \Z^n$, $m\in \Z$ we have that
$mu\in \Lambda$ if and only if $u\in \Lambda$.

By definition a {\em torus} in $\GL(n,\C)$ is a connected algebraic
subgroup consisting of commuting semisimple elements (\cite[\S 8.5]{borel}).
An example of a torus is the group
$$D(n,\C) = \{ \diag(a_1,\ldots,a_n) \mid a_1\cdots a_n\neq 0\}.$$
If $T\subset \GL(n,\C)$ is a torus then there is a $C\in \GL(n,\C)$ such
that $CTC^{-1} \subset D(n,\C)$. If we have a basis of the Lie algebra
$\ttt= \Lie(T)$ of $T$ then we can find such a $C$. Indeed, $\ttt$ also
consists of commuting semisimple elements so by simultaneously diagonalizing
the elements of a basis of $\ttt$ we can find a matrix $C$ such that
$C\ttt C^{-1}$ consists of diagonal matrices. Now
$\mathrm{Int}_C : g\mapsto CgC^{-1}$ is an automorphism of $\GL(n,\C)$ with
differential $\mathrm{Ad}_C : x\mapsto CxC^{-1}$ for $x\in \gl(n,\C)$
(\cite[\S 3.13]{borel}). Define $T' = CTC^{-1}$, then $\mathrm{Int}_C :
T\to T'$ is
an isomorphism, so the same holds for  $\mathrm{Ad}_C : \Lie(T) \to \Lie(T')$.
Hence $\Lie(T') = C\ttt C^{-1}$. Since $T'$ is connected and $\Lie(T')$ consists
of diagonal matrices it follows that $T'\subset D(n,\C)$. 

Let $T\subset D(n,\C)$ be a torus and set $\ttt=\Lie(T)$.
Define the following lattices
\begin{align*}
\Lambda(T) &= \{(e_1,\ldots,e_n)\in \Z^n \mid \prod_{i=1}^n a_i^{e_i}=1
\text{ for all } \diag(a_1,\ldots,a_n)\in T\}\\
\Lambda(\ttt) &= \{(e_1,\ldots,e_n)\in \Z^n \mid \sum_{i=1}^n b_ie_i=0
\text{ for all } \diag(b_1,\ldots,b_n)\in \ttt\}.
\end{align*}

For $e=(e_1,\ldots,e_n)$ let $\chi_e$ be
the character of $D(n,\C)$ given by
$$\chi_e(\diag(a_1,\ldots,a_n))=\prod_i a_i^{e_i}.$$
Its differential is given by
$d\chi_e (\diag(b_1,\ldots,b_n))=\sum_i e_ib_i$.

\begin{lemma}\label{lem:tor1}
$\Lambda(T)=\Lambda(\ttt)$.
\end{lemma}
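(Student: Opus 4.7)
The plan is to prove the two inclusions separately, using the relation between characters of $D(n,\C)$ and their differentials that is recorded just before the lemma: for $e=(e_1,\ldots,e_n)$, the character $\chi_e$ of $D(n,\C)$ has differential $d\chi_e$ given by $d\chi_e(\diag(b_1,\ldots,b_n))=\sum_i e_ib_i$. Thus $e\in\Lambda(T)$ is equivalent to $\chi_e|_T\equiv 1$, and $e\in\Lambda(\ttt)$ is equivalent to $d\chi_e|_\ttt\equiv 0$.

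For the inclusion $\Lambda(T)\subseteq\Lambda(\ttt)$, suppose $e\in\Lambda(T)$, so the restricted character $\chi_e|_T\colon T\to\C^*$ is the constant map $1$. Differentiating a constant morphism at the identity yields the zero map, and the differential of $\chi_e|_T$ is the restriction of $d\chi_e$ to $\ttt=\Lie(T)$. Hence $d\chi_e|_\ttt\equiv 0$, i.e.\ $e\in\Lambda(\ttt)$.

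For the reverse inclusion $\Lambda(\ttt)\subseteq\Lambda(T)$, suppose $e\in\Lambda(\ttt)$, so $d\chi_e|_\ttt\equiv 0$. Consider the morphism of algebraic groups $\varphi=\chi_e|_T\colon T\to\C^*$ and its kernel $K=\ker(\varphi)$, a closed subgroup of $T$. Its Lie algebra is $\Lie(K)=\ker(d\varphi)=\ker(d\chi_e|_\ttt)=\ttt$, using that $d\chi_e$ vanishes on $\ttt$. Since $K^\circ$ is a connected closed subgroup of the connected group $T$ with the same Lie algebra as $T$, and the identity component is determined by its Lie algebra (the fact cited from \cite[\S 7.1(2)]{borel}), we conclude $K^\circ=T$, hence $K=T$ and $\chi_e|_T\equiv 1$, i.e.\ $e\in\Lambda(T)$.

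The only delicate point is the second inclusion, where one must correctly invoke the principle that a connected algebraic group in characteristic zero is determined by its Lie algebra; everything else is immediate from the definitions and the formula for $d\chi_e$.
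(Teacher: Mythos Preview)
Your proof is correct, and the first inclusion is argued exactly as in the paper. For the reverse inclusion, however, you take a different route. The paper looks at the \emph{image} $\chi_e(T)$: since $d\chi_e|_{\ttt}=0$, one has $\Lie(\chi_e(T))=0$, so $\chi_e(T)$ is finite; hence some multiple $ke$ lies in $\Lambda(T)$, and then the paper invokes the fact (cited from \cite{gra16}) that $\Lambda(T)$ is a pure sublattice when $T$ is connected to conclude $e\in\Lambda(T)$. You instead look at the \emph{kernel} $K=\ker(\chi_e|_T)$, observe that $\Lie(K)=\ttt$, and apply the Lie correspondence directly to get $K^\circ=T$. Your argument is somewhat more economical in that it bypasses the purity statement, using only the Lie correspondence already cited in the introduction; the paper's argument, on the other hand, makes the role of connectedness of $T$ more visibly explicit through the purity of $\Lambda(T)$.
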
  
  
\begin{proof}
Let $e\in \Lambda(T)$ then $\chi_e$ maps $T$ to $1$, hence
$d\chi_e$ maps $\ttt$ to $0$, so that $e\in \Lambda(\ttt)$. For the converse let
$e\in \Lambda(\ttt)$. This means that 
$d \chi_e$ maps $\ttt$ to $0$. In other words, $\Lie (\chi_e(T))=0$. Hence
$\chi_e(T)$ is a finite group so there
is a $k>0$ such that for all $g\in T$ we have $\chi_e(g)^k=1$. But
$\chi_e(g)^k = \chi_{ke}(g)$. It follows that $ke\in \Lambda(T)$. As $T$ is
connected, $\Lambda(T)$ is pure (\cite[Proposition 3.9.7]{gra16}).
It follows that $e\in \Lambda(T)$.
\end{proof}

For a lattice $\Lambda\subset \Z^n$ we define
\begin{align*}
T(\Lambda) &= \{ \diag(a_1,\ldots,a_n)\mid \prod_{i=1}^n a_i^{e_i}=1
\text{ for all } (e_1,\ldots,e_n)\in \Lambda\}\\
\ttt(\Lambda) &= \{ \diag(b_1,\ldots,b_n) \mid \sum_{i=1}^n b_ie_i=0
\text{ for all } (e_1,\ldots,e_n)\in \Lambda\}.
\end{align*}

\begin{lemma}\label{lem:tor2}
Let $T\subset   D(n,\C)$, $\ttt=\Lie(T)$ be as above. Set $\Lambda=\Lambda(T)=
\Lambda(\ttt)$ (cf. Lemma \ref{lem:tor1}). Then $T(\Lambda)=T$ and
$\ttt(\Lambda)=\ttt$.
\end{lemma}

\begin{proof}
A well known theorem states that an algebraic subgroup of $D(n,\C)$ is the
intersection of the kernels of a number of characters $\chi_e$ (cf.
\cite[Proposition 8.2(c)]{borel}).
Since $\Lambda$ is the largest set of $e\in \Z^n$ such that $T$ is contained
in the kernel of $\chi_e$ for all $e\in \Lambda$, the expression for $T$
follows. Because $\Lie(T(\Lambda)) = \ttt(\Lambda)$ this implies the formula
for $\ttt$ as well.
\end{proof}

Let, as before, $T\subset D(n,\C)$ be a torus and $\ttt=\Lie(T)$ its Lie
algebra. By the previous lemma $\ttt = \ttt(\Lambda)$, where $\Lambda=
\Lambda(\ttt)$. So $\ttt$ has a basis consisting of matrices with entries
in $\Q$.
Given such a basis $B$ we can compute a basis of the vector space
$$M_\ttt=\{ (\gamma_1,\ldots,\gamma_n)\in \Q^n \mid \sum_{i=1}^n \gamma_i b_i =0
\text{ for all } \diag(b_1,\ldots,b_n)\in B\}.$$
Furthermore, $\Lambda(\ttt) = M_\ttt\cap \Z^n$, which can be computed by 
the ``purification algorithm'' (\cite[\S 6.2]{gra16}). We conclude that,
given a basis of $\ttt$ we can compute a basis of $\Lambda(\ttt)$.

\begin{rmk}\label{rem:intor}
If $T$ is a torus and $s\in \GL(n,\C)$ semisimple then we can test whether
$s\in T$, just knowing a basis of $\ttt=\Lie(T)$. Indeed, as seen above
we can compute a matrix $C\in \GL(n,\C)$ such that $T' = CT C^{-1}$ is
contained in $D(n,\C)$.
Set $\ttt'=C\ttt C^{-1}$, then $\ttt'=\Lie(T')$.  As outlined above we can
compute a basis of $\Lambda=\Lambda(\ttt')= \Lambda(T')$. As $T'=T(\Lambda)$
we see that $s'=CsC^{-1}$ lies in $T'$ if and only if we have $s'=\diag(a_1,
\ldots,a_n)$ and
$$\prod_{i=1}^n a_i^{e_i}=1 \text{ for all } (e_1,\ldots,e_n)\in \Lambda.$$
In fact, it suffices to check the latter condition for the elements of
a basis of $\Lambda$. Finally we remark that $CsC^{-1}\in T'$ is equivalent
to $s\in T$.
\end{rmk}  

\section{Deciding membership}\label{sec:in}

Let $G\subset \GL(n,\C)$ be a connected algebraic group, $\g=\Lie(G)\subset
\gl(n,\C)$ its Lie algebra. Let $g\in \GL(n,\C)$ be given. In this section
we consider the problem to decide whether $g\in G$, just knowing a basis of
$\g$.

A first step is to compute the multiplicative Jordan decomposition
$g=su$ ($s$ semisimple, $u$ unipotent). As algebraic groups are closed
under this decomposition (\cite[Theorem 4.4]{borel}) we have that 
$g\in G$ if and only if both $s,u\in G$. Furthermore, it is easy to decide
whether $u\in G$. Indeed, as $u$ is unipotent we have that $u-1$ is nilpotent
and we define
$$\log(u) = \sum_{i\geq 1} (-1)^{i-1}\frac{(u-1)^i}{i}.$$
Let $G(u)$ be the smallest algebraic subgroup of $\GL(n,\C)$ containing $u$.
Then $G(u)$ is of dimension 1 and its Lie algebra is spanned by
$\log(u)$ (\cite[Proposition 4.3.10]{gra16}). We have $u\in G$ if and only if 
$G(u)\subset G$, which, since both groups are connected, is equivalent
to $\Lie(G(u))\subset \g$ (\cite[\S 7.1]{borel}). We conclude that
$u\in G$ if and only if $\log(u)\in \g$. The latter condition can easily be
tested. So it remains to decide whether $s\in G$.

By definition a
{\em Cartan subgroup} of $G$ is the centralizer of a maximal torus of $G$.
We have the following facts:
\begin{itemize}
\item A Cartan subgroup of $G$ is connected (\cite[Corollary 11.12]{borel}).
\item Let $T$ be a maximal torus of $G$ and $H=Z_G(T)$ the corresponding
  Cartan subgroup. Then $H=T\times H_u$, where $H_u$ is a subgroup of
  $H$ consisting of all unipotent elements of $H$
  (\cite[Theorem 12.1(d)]{borel}).
\item A subalgebra $\h\subset \g$ is a Cartan subalgebra if and only if it
  is the Lie algebra of a Cartan subgroup (\cite[Proposition 29.2.5]{tauvelyu}).
\end{itemize}

In particular it follows that if $\h$ is a Cartan subalgebra of $\g$
then $\h =\ttt\oplus \u$ where $\ttt$ consists of semisimple elements and
$\u$ consists of nilpotent elements. With the above notation we have
$\ttt=\Lie(T)$ and $\u=\Lie(H_u)$. 
We can compute bases of $\ttt$ and $\u$
by computing the Jordan decomposition of all basis elements of $\h$. Then
$\ttt$ is spanned by all semisimple parts and $\u$ is spanned by all nilpotent
parts. Indeed, the set consisting of all semisimple and nilpotent parts spans
$\h$, and the semisimple parts lie in $\ttt$ and the nilpotent parts lie
in $\u$.

\begin{theorem}\label{thm:sint}
Let $G$, $\g$ be as above. Let $s\in \GL(n,\C)$ be semisimple and suppose that
$s\g s^{-1} = \g$. Let $\z_\g(s) = \{ x\in \g\mid sx=xs\}$ and let $\h$ be a
Cartan subalgebra of $\z_\g(s)$. Write $\h=\ttt\oplus \u$, where $\ttt$ is the
set of semisimple elements of $\h$ and $\u$ is the set of nilpotent elements
of $\h$. Let $T\subset \GL(n,\C)$ be the connected subgroup with $\Lie(T) =
\ttt$. Then $s\in G$ if and only if $s\in T$.
\end{theorem}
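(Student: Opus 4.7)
The implication $s\in T\Rightarrow s\in G$ is immediate: since $\ttt\subset \h \subset \z_\g(s)\subset \g$, the connected algebraic subgroup $T$ with $\Lie(T)=\ttt$ is contained in the connected group $G$, and hence $s\in T$ forces $s\in G$.

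For the reverse implication I assume $s\in G$. The plan is to exhibit $T$ as a maximal torus of an auxiliary connected group in whose centre $s$ sits. Since $s$ is semisimple and we are in characteristic zero, the centralizer $Z_G(s)$ is a closed subgroup of $G$ whose Lie algebra equals $\z_\g(s)$; let $H=Z_G(s)^\circ$, a connected algebraic group with $\Lie(H)=\z_\g(s)$. By construction every element of $Z_G(s)$ commutes with $s$, so $s$ lies in the centre of $Z_G(s)$, and therefore in the centre of $H$.

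I would then invoke conjugacy of maximal tori in $H$ to conclude that the semisimple central element $s$ lies in \emph{every} maximal torus of $H$. Indeed, any semisimple element of $H$ sits in some maximal torus $S_0$, and since $s$ is central it is fixed by every inner automorphism of $H$, so it lies in every $H$-conjugate of $S_0$, i.e.\ in every maximal torus.

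It remains to identify $T$ as such a maximal torus. By the third bulleted fact in the excerpt applied to $H$, the Cartan subalgebra $\h\subset\z_\g(s)$ is the Lie algebra of a Cartan subgroup $C=Z_H(S)$ of $H$, where $S$ is a maximal torus. By the second bulleted fact, $C=S\times C_u$ with $C_u$ unipotent; passing to Lie algebras gives $\h=\Lie(S)\oplus \Lie(C_u)$, a decomposition into a semisimple and a nilpotent summand. Uniqueness of the Jordan decomposition of the elements of $\h$ then forces $\Lie(S)=\ttt$, and since $T$ and $S$ are both connected subgroups of $\GL(n,\C)$ with the same Lie algebra they coincide. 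Combining with the previous paragraph, $s\in S=T$. The pivotal step, and the main potential obstacle, is the claim that $s$ lies in every maximal torus of $H$; the rest is bookkeeping with the structural facts already collected in the excerpt.
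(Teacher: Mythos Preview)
Your overall strategy matches the paper's: pass to $H=Z_G(s)^\circ$, identify $T$ with a maximal torus of $H$ via the Cartan subgroup/subalgebra correspondence, and then argue that the semisimple element $s$ lands in that torus. The variation you use at the end (``a semisimple central element lies in every maximal torus'') is a harmless rephrasing of the paper's ``$s$ centralizes $T$, hence lies in the Cartan subgroup $T\times H_u$, hence in $T$''.

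There is, however, a genuine gap earlier. From ``every element of $Z_G(s)$ commutes with $s$'' you conclude that $s$ lies in the centre of $Z_G(s)$ (fine, since trivially $s\in Z_G(s)$), and then ``therefore in the centre of $H$''. But $Z(H)$ is by definition a subset of $H=Z_G(s)^\circ$, and you have not shown that $s$ lies in the identity component $Z_G(s)^\circ$. This is not automatic: already for $G=\mathrm{PGL}_2(\C)$ and $s$ the image of $\mathrm{diag}(1,-1)$, the centralizer $Z_G(s)$ has two components, so ``$s$ commutes with everything in $Z_G(s)$'' does not by itself place $s$ inside $Z_G(s)^\circ$. Your subsequent argument (``any semisimple element of $H$ sits in some maximal torus \ldots'') explicitly uses $s\in H$, so the whole chain rests on this unproved step.

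This is precisely the point the paper isolates and does \emph{not} treat as routine: it invokes \cite[Proposition 28.3.1(iii)]{tauvelyu} to get $s\in Z_G(s)^\circ$. Once you insert that citation (or an equivalent argument that a semisimple element of a connected group lies in the identity component of its centralizer), your proof goes through and is essentially the paper's. The ``pivotal step'' you flag in your last sentence is actually not the delicate one; the delicate one is $s\in H$, which you passed over with a ``therefore''.
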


\begin{proof}
Set $Z_G(s) = \{ g\in G \mid gs=sg\}$. Then $Z_G(s)$ is given, as subgroup of
$G$ by a set of linear equations. Therefore its Lie algebra is given, as a
subalgebra of $\g$, by the same equations. Hence $\z_\g(s)$ if the Lie algebra
of $Z_G(s)$. In particular $\z_\g(s)$ is the Lie
algebra of an algebraic group, and therefore $\h$ has the stated
decomposition. Furthermore, $\h$ is the Lie algebra of a Cartan subgroup
which is the centralizer of a maximal torus $T'$ of $Z_G(s)^\circ$. The Lie
algebra of $T'$ is $\ttt$, and therefore $\ttt$ is algebraic and $T'=T$.

Now suppose that $s\in G$. By \cite[Proposition 28.3.1(iii)]{tauvelyu}
$s\in  Z_G(s)^\circ$.
Since $\h$ is a Cartan subalgebra of
$\z_\g(s)$ it follows that $T$ is a maximal torus of $ Z_G(s)^\circ$.
The centralizer of $T$ in $Z_G(s)^\circ$ is the Cartan subgroup $H$ with
$\Lie(H)=\h$. Write $H=T\times H_u$, where $T$ is the set of semisimple elements
of $H$ and $H_u$ is the set of unipotent elements of $H$. Since $s$ commutes
with all elements of $Z_G(s)^\circ$ it follows that $s\in H$ so that $s\in T$.
The other implication is trivial. 
\end{proof}

We recall that for $g\in G$ the adjoint map $\mathrm{Ad}(g)$, mapping
$x\in \g$ to $gxg^{-1}$ is an automorphism of $\g$ (\cite[\S 3.13]{borel}).
So if $s\g s^{-1}\neq \g$ then $s\not\in G$.

Now the procedure for checking whether $s\in G$ is straightforward:
\begin{enumerate}
\item If $s\g s^{-1} \neq \g$ then $s\not\in G$, otherwise
  continue.
\item Compute a basis of $\z_\g(s)$.
\item Compute a Cartan subalgebra $\h$ of $\z_\g(s)$ (we refer to
  \cite[\S 3.2]{gra6} for algorithms for this task)
  and bases of $\ttt$,
  $\u$ where $\ttt$ is the set of semisimple elements of $\h$ and $\u$ is
  the set of nilpotent elements of $\h$.
\item Let $T\subset G$ be the connected algebraic group with Lie algebra
  $\ttt$. Use the procedure outlined in Remark \ref{rem:intor} for checking
  whether $s\in T$. By Theorem \ref{thm:sint} we have $s\in T$ if and only if
  $s\in G$.
\end{enumerate}

\section{Computing the Lie algebra of $G(g)$}\label{sec:lieGg}

For $g\in \GL(n,\C)$ we denote by $G(g)$ the smallest algebraic subgroup of
$\GL(n,\C)$ containing $g$. Here we consider computing a basis of the Lie
algebra of $G(g)$ when $g$ is either semisimple or unipotent.

The unipotent case is straightforward. 
If $u\in \GL(n,\C)$ is unipotent then $G(u)$ is 1-dimensional and
$\Lie (G(u))$ is spanned by $\log(u)$ (\cite[Proposition 4.3.10]{gra16}). 

Let $s\in \GL(n,\C)$ be semisimple. Let $C\in \GL(n,\C)$ be such that
$s'= CsC^{-1}$ is diagonal, $s'=\diag(\alpha_1,\ldots,\alpha_n)$. Set
$$\Lambda'= \{ e\in \Z^n \mid \prod_{i=1}^n \alpha_i^{e_i}=1\}.$$
As $s'\in D(n,\C)$ we also have $G(s')\subset D(n,\C)$. So by
\cite[Proposition 8.2(c)]{borel} $G(s')$ is the intersection of a number of
characters of $D(n,\C)$. These characters are of the form $\chi_e$ for
$e\in \Z^n$ (notation as in Section \ref{sec:tori}) and $s'$ has to lie in
their kernel. It follows that $G(s')=T(\Lambda')$.
Hence $\Lie (G(s')) = \ttt(\Lambda')$. (Note, however, that $G(s')$ is not
necessarily connected; this is the case if and only if $\Lambda'$ is pure.)
Therefore $\Lie (G(s)) = C^{-1}\ttt(\Lambda') C$. 

So the remaining problem is to compute a basis of $\Lambda'$.
This can be done by an algorithm due to Ge, \cite{ge_thesis}.
Recently Combot (\cite{combot}) has developed a different approach.
We refer to the references in the latter paper for an overview of the
literature on this subject. Here we also mention the paper \cite{zheng},
which, among other things, has a sufficient criterion for establishing that
the lattice of multiplicative relations is trivial (this is for example the
case when the Galois group is 2-transitive).

\section{Computing the Zariski closure}\label{sec:main}

Let $g_1,\ldots,g_s\in \GL(n,\C)$. We assume that they have coefficients in
$\Q$, or more generally in a finite extension of $\Q$. In this section we
describe an algorithm to compute the Zariski closure $G$ of the group
generated by the $g_i$.
The output is a basis of the Lie algebra $\g\subset \gl(n,\C)$ of $G$ along
with an element of each component of $G$.

The main idea is to approximate the Lie algebra $\g$ by a Lie algebra
$\hat \g\subset \g$. If $\hat \g$ happens to be equal to $\g$ then we
know the identity component $G^\circ$. Note that the group $U$ generated
by $g_1,\ldots,g_s$ is dense in $G$, so $U$ meets every component of $G$.
Therefore $g_1G^\circ,\ldots,g_sG^\circ$ generate $G/G^\circ$. So by enumerating
products of the $g_iG^\circ$ we obtain the component group. If $\hat \g$ is
strictly smaller than $\g$ we let $\widehat{G}$ be the connected algebraic
group with Lie algebra $\hat \g$. We enumerate products of elements
$g_i\widehat{G}$. When doing that, at some point we find an element
$h\widehat{G}$ such that $\Lie( G(h))$ is not contained in $\hat \g$.
When that happens we enlarge $\hat \g$. 

By computing multiplicative Jordan decompositions we may assume that
each $g_i$ is either semisimple or unipotent. So the input to the
algorithm is a set $A$ of elements of $\GL(n,\C)$ that are either semisimple
or nilpotent. The algorithm takes the
following steps.

\begin{enumerate}  
\item Let $\hat\g$ be the Lie algebra generated by all $\Lie (G(g))$ for
  $g\in A$.
\item While there is $g\in A$ with $g\hat\g g^{-1} \neq \hat\g$ replace
  $\hat\g$ by
  the Lie algebra generated by $\hat\g$ and $g\hat\g g^{-1}$.
\item Let $C=\emptyset$. Let  $\widehat{G}$ be the
    connected algebraic group with Lie algebra $\hat\g$.
  For $l=1,2,\ldots$ compute all products of elements of $A$ of length
  $l$. For each new element $a$ do the following:
  \begin{enumerate}
  \item If there is a $b\in C$ with $ab^{-1}\in \widehat{G}$ (to decide this we
    use the algorithm of Section \ref{sec:in}) then
    discard $a$.
  \item Compute the Jordan decomposition $a=su$ and $\Lie(G(s))$,
    $\Lie(G(u))$. If one of these algebras is not contained in $\hat\g$ then
    add $s,u$ to $A$ and return to (1).
  \item Add $a$ to the set $C$.
  \item If no products of length $l$ have been added to $C$ then stop, and the
    output is $\hat\g$, $C$.
  \end{enumerate}  
\end{enumerate}  

\begin{theorem}
The algorithm terminates and upon termination $\hat\g=\Lie(G)$ and
$C$ contains exactly one element of each component of $G$.  
\end{theorem}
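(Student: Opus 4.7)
The plan is to prove the invariant $\hat\g\subseteq\g$ throughout, then establish correctness at termination, and finally address termination itself. The invariant holds initially ($g_i\in G$ implies $\Lie(G(g_i))\subseteq\g$), is preserved by step (2) (since $g_i\g g_i^{-1}=\g$), and survives step 3(b) because the Jordan parts $s,u$ of any $a\in G$ again lie in $G$ by standard facts on algebraic groups. After step (2) each $g_i$ normalizes the Lie algebra $\hat\g$ and hence the unique connected algebraic group $\widehat{G}$ with $\Lie(\widehat{G})=\hat\g$; so $U=\langle g_1,\ldots,g_s\rangle$ normalizes $\widehat{G}$, and taking closures $G$ does too. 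Consequently $G/\widehat{G}$ is a linear algebraic group, and $\widehat{G}\subseteq G^\circ$ since both are connected and contained.

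The central claim is: if step (3) exits via 3(d) at length $l$, then $\hat\g=\g$. Let $H$ be the image in $G/\widehat{G}$ of $U$ with its current (possibly augmented) generator set, and write $\bar C=\{\bar c:c\in C\}$. The termination condition forces every length-$l$ product into $\bar C$ and requires every $c\in C$ to have word-length $\leq l-1$. By induction on $k$ (for $k\leq l$ this is immediate from how the algorithm processes each length; for the step, write a length-$k$ product as $g_i a$, use the inductive hypothesis to get $\bar a=\bar c$ for some $c\in C$, and observe that $g_i c$ is a word of length $\leq l$ already processed so $\overline{g_i c}\in\bar C$), every word in the generators maps into $\bar C$; hence $H=\bar C$ is finite. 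On the other hand $H$ is dense in $G/\widehat{G}$, because $U$ is dense in $G$ and the quotient map is a continuous surjection; a finite set is Zariski-closed, so $H=G/\widehat{G}$, forcing $G^\circ=\widehat{G}$, that is, $\hat\g=\g$. Distinctness within $C$ is enforced by step 3(a), and surjectivity onto $G/G^\circ$ follows from the density of $U$ in $G$, so $C$ meets each component of $G$ exactly once.

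Finally, termination: each triggering of step 3(b) and each pass of the inner loop in step (2) strictly increases $\dim\hat\g$, bounded by $\dim\g$, so each occurs at most $\dim\g$ times. To rule out the enumeration in (3) running forever without triggering 3(b), suppose this happens in a stage with $\hat\g\subsetneq\g$. Every enumerated $a$ then has Jordan parts with $\Lie(G(s)),\Lie(G(u))\subseteq\hat\g$, so $u\in\widehat{G}$ (connectedness of $G(u)$) and $\bar s$ has finite order (since $G(s)^\circ\subseteq\widehat{G}$ and $G(s)/G(s)^\circ$ is finite). Hence the finitely generated subgroup $H\subseteq G/\widehat{G}\hookrightarrow\GL(N,\C)$ is entirely torsion, and by Schur's theorem on finitely generated periodic subgroups of $\GL(N,\C)$ it is finite; density then forces $G/\widehat{G}$ to be finite, contradicting $\hat\g\subsetneq\g$. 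The main obstacle is precisely this last step---ruling out an infinite, finitely generated, dense torsion subgroup of a positive-dimensional algebraic group---for which Schur's theorem is the decisive input.
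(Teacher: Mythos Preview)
Your proof is correct and follows essentially the same route as the paper's: both establish that $\widehat{G}$ is normal in $G$ (you via the closedness of normalizers, the paper via an explicit polynomial-vanishing argument), both observe that in a stage where 3(b) does not trigger every coset $\bar a$ has finite order, and both invoke Schur's theorem on finitely generated periodic linear groups (after embedding $G/\widehat{G}$ in some $\GL(N,\C)$ via Chevalley) to conclude that $G/\widehat{G}$ is finite.

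The differences are organizational rather than substantive. You make the running invariant $\hat\g\subseteq\g$ explicit and verify it is preserved by each step; the paper leaves this implicit. You also separate correctness-at-termination from termination, supplying a clean combinatorial induction showing that once 3(d) fires at length $l$ every positive word maps into $\bar C$, whence $H=\bar C$ is finite and density forces $G/\widehat{G}$ finite; the paper instead proves finiteness of $Q=G/\widehat{G}$ directly via Schur and derives both termination and correctness from that single statement. One small point worth making explicit in your write-up: the induction over word lengths runs over \emph{positive} words in the $g_i$, so to conclude that the full image $H$ of $U$ is contained in $\bar C$ you are using that each $\bar g_i$ already has finite order (since $\Lie(G(g_i))\subseteq\hat\g$ after Step~1 forces $G(g_i)^\circ\subseteq\widehat{G}$), so the semigroup and group generated by the $\bar g_i$ coincide. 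Similarly, your final paragraph tacitly handles the case $\hat\g=\g$ by the trivial observation that then $G/\widehat{G}=G/G^\circ$ is finite; stating this would make the case split complete.
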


\begin{proof}
We recall that a Lie subalgebra of $\gl(n,\C)$ is said to be algebraic if it
is the Lie algebra of an algebraic subgroup of $\GL(n,\C)$
(\cite[\S 7]{borel}).   
The Lie algebra constructed in Step 1 is generated by algebraic Lie algebras
and hence is algebraic (\cite[Corollary 7.7(d)]{borel}).
The same holds for the Lie algebra constructed in
Step 2. However, that Lie algebra additionally is normalized by all
$g\in A$. Let $\hat\g$ denote the Lie algebra at the end of Step 2, and let
$\widehat{G}$ be the connected algebraic subgroup of $\GL(n,\C)$ with Lie
algebra $\hat\g$. Then $\widehat{G}$ is also normalized by all $g\in A$. Indeed,
the Lie algebra of $g\widehat{G}g^{-1}$ is $g\hat\g g^{-1}$. But in
characteristic zero a connected algebraic group is determined by its Lie algebra
(cf. \cite[\S 7.1]{borel}) so because for $g\in A$ we have
$g\hat\g g^{-1} = \hat \g$ it also holds that $g\widehat{G}g^{-1}=\widehat{G}$.

We claim that $\widehat{G}$ is a normal subgroup of $G$. In order to show
that let $R= \C[x_{11},x_{12},\ldots,x_{nn},\tfrac{1}{\det(x_{ij})}]$.
Let $p_1,\ldots,p_r\in R$
generate the vanishing ideal of $\widehat{G}$. For a fixed $h\in
\widehat{G}$ define $p_i^h\in R$ by $p_i^h(g) = p_i(ghg^{-1})$ for
$g\in \GL(n,\C)$. Let $\mathcal{G}\subset \GL(n,\C)$ be the group
generated by $A$.
Then all elements of $\mathcal{G}$ normalize $\widehat{G}$. So for $h\in
\widehat{G}$ and $1\leq i\leq r$ we have that $p_i^h$ vanishes on
$\mathcal{G}$. Hence it vanishes on
the closure of $\mathcal{G}$ which is $G$. So for $g\in G$ we have that
$p_i(ghg^{-1})=0$ for all $h\in \widehat{G}$ and $1\leq i\leq r$. It follows that
$ghg^{-1}\in\widehat{G}$ for $h\in \widehat{G}$ and $g\in G$. 

Note that after Step 3(b) the dimension of the Lie algebra $\hat\g$
increases. So after
a finite number of rounds this does not happen any more. 
Let $\hat\g$ and $\widehat{G}$ be the Lie algebra
and corresponding group at that point. Write $A=\{g_1,\ldots,g_s\}$ and 
let $Q\subset G/\widehat{G}$ be the
group generated by $g_1\widehat{G},\ldots,g_s\widehat{G}$. Consider products of
the form
\begin{equation}\label{eq:prod}
  g_{i_1}\cdots g_{i_k} \text{ for some $k\geq 1$}.
\end{equation}
In Step (3) all such products are enumerated. Let $a$ be an element of the
form \eqref{eq:prod}. 
At some point this element is considered in Step (3).
As the second half of Step 3(b) is not
executed we have that $\Lie (G(a)) \subset \hat \g$, whence $G(a)^\circ
\subset \widehat{G}$. Since $G(a)/G(a)^\circ$ is finite it follows that $a^r\in
G(a)^\circ$ for some $r\geq 1$. This also means that $a^r\in \widehat{G}$.
So the inverse of $a\widehat{G}$ is also the coset of a product of the form
\eqref{eq:prod}.
Therefore $Q$ consists of the cosets of the products of the form
\eqref{eq:prod} and all elements of $Q$ are of finite order.

By \cite[Theorem 5.6]{borel} there is a linear representation $\rho :
G\to \GL(m,\C)$ with kernel $\widehat{G}$. This induces an injective
homomorphism $\bar \rho : Q\to \GL(m,\C)$. The image of $\bar\rho$ consists
of elements of finite order. It is known that a finitely generated subgroup
of $\GL(m,\C)$ whose elements have finite order, must be finite
\cite[Theorem G of Section II.2]{kaplansky}.
It follows that $Q$ is finite, and hence the algorithm terminates.

Upon termination $\widehat{G}$ is a connected normal subgroup of finite
index in $G$ hence it equals $G^\circ$. Therefore $\hat\g=\Lie(G)$.
\end{proof}

\section{Examples and practical experiences}\label{sec:exa}

The algorithms described in this paper have been implemented\footnote{Currently the implementation is not publicly available, but can be obtained from the author.} in 
the computer algebra system {\tt OSCAR} (\cite{OSCAR-book,OSCAR}).
The core of this system is written in the programming language {\tt Julia}.
However, it also includes and builds on computer algebra systems like
{\tt GAP}, {\tt Singular}, {\tt Polymake} and {\tt ANTIC}. In particular,
in {\tt OSCAR} it is possible to start a {\tt GAP} session, and from that
{\tt GAP} session it is possible to access and work with the functions
written in the {\tt Julia} part.

The main body of the implementation of the algorithms of this paper has been
written in {\tt GAP}. For the problem of computing the multiplicative relations
of the eigenvalues of a matrix we access the {\tt Julia} part, which has
an implementation of an algorithm for that purpose. For the diagonalization
of a torus (which is needed for the algorithm of Section \ref{sec:in}) we
use the algebraic closure of $\Q$, which is implemented in the {\tt Julia}
part of {\tt OSCAR} and to which the {\tt GAP} part has an interface.

We consider example inputs that are constructed in the following way.
Let $\a$ be a simple complex Lie algebra and let $\rho : \a \to \gl(V)$ be a
finite-dimensional representation. Let $\Phi$ denote the root system of $\a$
(with respect to a fixed Cartan subalgebra $\h$) and let $x_{\alpha}$ for
$\alpha\in
\Phi$ be the root vectors lying in a fixed Chevalley basis of $\a$
(cf. \cite[\S 25]{hum}). For $t\in \C$ we write $x_{\alpha}(t) =
\exp(t\rho(x_\alpha))$. Furthermore, for $\alpha\in \Phi$ we set
$$w_\alpha = x_{\alpha}(1)x_{-\alpha}(-1)x_{\alpha}(1).$$
Then $w_\alpha\in \GL(V)$ normalizes $\rho(\h)$ and induces the reflection
$s_\alpha$ on $\h$. Let $\Delta = \{\alpha_1,\ldots,\alpha_\ell\}$ be a fixed
basis of $\Phi$. Then the Weyl group $W$ of $\Phi$ is generated by the
simple reflections $s_i = s_{\alpha_i}$ for $1\leq i\leq \ell$.
For $w\in W$ we fix a reduced expression $w=s_{i_1}\cdots s_{i_k}$ and set
$\dot{w} = w_{\alpha_{i_1}}\cdots w_{\alpha_{i_k}}$. Now we take (randomly) $\ell$
elements $w_1,\ldots,w_\ell$ in $W$ and set
$$g_i = x_{\alpha_i}(3) \dot{w}_i x_{\alpha_i}(-3) \text{ for } 1\leq i\leq \ell.$$
Then $g_1,\ldots,g_\ell$ is the input to our algorithm. These are all of
finite order, so each $\Lie(G(g_i))$ is zero. However, it turns out to be
likely that their closure is the connected group with Lie algebra $\rho(\a)$.
We now give these matrices for a few specific cases.

For $\a$ of type $G_2$ and $V$ of dimension 7 we get
$$\SmallMatrix{0&0&0&0&-1&0&-3\\0&0&0&0&0&0&-1\\
      0&-1&0&6&-18&-9&27\\0&0&0&1&-3&-3&9\\
      0&0&0&0&0&-1&3\\1&-3&3&-18&27&0&0\\
      0&0&1&-6&9&0&0 },\,\,
  \SmallMatrix{0&0&0&0&0&0&-1\\0&0&0&0&-3&10&0\\
      0&0&0&0&1&-3&0\\0&0&0&-1&0&0&0\\
      0&3&10&0&0&0&0\\0&1&3&0&0&0&0\\
      -1&0&0&0&0&0&0 }.$$

For $\a$ of type $A_3$ and $V$ of dimension 6 we get
$$\SmallMatrix{
      0&0&0&0&0&-1\\0&0&-1&0&3&0\\0&0&0&0&1&0\\
      0&-1&3&3&-9&0\\0&0&0&1&-3&0\\-1&0&0&0&0&0},\,\,
\SmallMatrix{0&-3&1&0&0&0\\0&-1&0&0&0&0\\1&-3&0&0&0&0\\
      0&0&0&0&0&1\\0&0&0&3&-1&3\\0&0&0&1&0&0},\,\,
\SmallMatrix{0&0&0&-1&0&0\\-1&-3&0&-9&0&0\\0&0&0&0&-1&-3\\
  0&1&0&3&0&0\\0&0&0&0&0&1\\0&0&-1&0&-3&0}.$$

Now we let $\a$ of type $B_2$ and for $V$ take the direct sum of two copies
of the irreducible module of dimension 4. This yields two elements of
$\GL(8,\C)$. We multiply the second element by the matrix $\SmallMatrix{
  0_4 & I_4 \\ I_4 & 0_4 }$ (where $0_4$, $I_4$ denote, respectively, the
$4\times 4$ zero matrix and the $4\times 4$ identity matrix).
By doing this we get a Zariski closure that has
two components. The matrices turn out to be

$$\SmallMatrix{0&0&-1&0&0&0&0&0\\-1&0&0&-3&0&0&0&0\\
      0&0&0&1&0&0&0&0\\0&-1&-3&0&0&0&0&0\\
      0&0&0&0&0&0&-1&0\\0&0&0&0&-1&0&0&-3\\
      0&0&0&0&0&0&0&1\\0&0&0&0&0&-1&-3&0 },\,\,
\SmallMatrix{0&0&0&0&-3&9&-1&3\\0&0&0&0&-1&3&0&0\\
      0&0&0&0&0&-3&0&1\\0&0&0&0&0&-1&0&0\\
      -3&9&-1&3&0&0&0&0\\-1&3&0&0&0&0&0&0\\
      0&-3&0&1&0&0&0&0\\0&-1&0&0&0&0&0&0}.$$

In Table \ref{fig:1} we give the runtimes of our implementation on these
inputs.  The second to third columns of this table have data relative to the
algorithm for computing multiplicative relations. The second column has the
degree of the field extensions (of the fields that contains the eigenvalues).
The third and fourth column have the number of times this routine was called,
and its total running time. The fourth and fifth column display, respectively,
the number of times the algorithm of Section \ref{sec:in} was called and its
total running time. The last column has the total running time of the
algorithm.

We see that on these inputs the running time is dominated by the time needed
for the algorithm of Section \ref{sec:in}. The time mainly goes into doing
arithmetic with algebraic numbers needed to diagonalize a torus. 

\begin{table}[htb]\label{fig:1}
  \caption{Running times of our algorithm on the inputs described in the text.}
\begin{tabular}{|l|r|r|r|r|r|r|}
\hline
$\a$ & \multicolumn{3}{|c|}{mult. rels} & \multicolumn{2}{|c|}{element test} &
total time (s)\\
\hline
& degree & count & time (s) & count & time (s) & \\
\hline
$G_2$ & 12 & 14 & 0.4 & 23 & 859 & 959 \\
\hline
$B_2$ & 8 & 20 & 46 & 105  & 105 & 157 \\
\hline
$A_3$ & 24 & 19 & 1.2 & 34 & 3699 & 4618 \\
\hline
\end{tabular}
\end{table}

\end{document}